\providecommand{\U}[1]{\protect\rule{.1in}{.1in}}
\newtheorem{theorem}{Theorem}[section]
\newtheorem{lemma}[theorem]{Lemma}
\newtheorem{remark}[theorem]{Remark}
\newtheorem{definition}[theorem]{Definition}
\definecolor{gree}{rgb}{0.,0.4,0}
\begin{document}
\title{An analog of polynomially integrable bodies in even-dimensional spaces}
\author{M. Agranovsky, A. Koldobsky, D. Ryabogin, and V. Yaskin}
\thanks{The second author was supported in part by the U.S. National Science Foundation Grant DMS-2054068. The third author was supported in part by the U.S. National Science Foundation Grant DMS-2000304. The fourth author was supported in part by NSERC}

\address[Mark Agranovsky]{Department of Mathematics, Bar-Ilan University, Ramat Gan, 5290002, Israel}

\email{agranovs@math.biu.ac.il}

\address[Alexander Koldobsky]{Department of Mathematics,
	University of Missouri-Columbia,
	Columbia, MO 65211, USA}

\email{koldobskiya@missouri.edu}

\address[Dmitry Ryabogin]{Department of Mathematics, Kent State University, Kent, OH 44242, USA}

\email{ryabogin@math.kent.edu}

\address[Vladyslav Yaskin]{Department of Mathematical and Statistical Sciences, University of Alberta, Edmonton, AB T6G 2G1, Canada}

\email{yaskin@ualberta.ca}

\begin{abstract}
 A bounded domain $K \subset \mathbb R^n$ is called polynomially integrable if
the $(n-1)$-dimensional volume of the intersection $K$ with a hyperplane $\Pi$ polynomially depends on the distance from $\Pi$ to the origin. It was proved in
\cite{KMY} that there are no such domains with smooth boundary if $n$ is even, and  if $n$ is odd then the only polynomially integrable domains with smooth boundary are ellipsoids.  In this article, we modify the notion of polynomial integrability for even $n$ and consider bodies for which the sectional volume function is a polynomial up to a factor which is the square root of a quadratic polynomial, or, equivalently, the Hilbert transform of this function is a polynomial. We prove that ellipsoids in even dimensions are the only convex infinitely smooth bodies satisfying this property.

\end{abstract}
\maketitle

\emph{Keywords:} Ellipsoids, volumes, polynomials, Radon transform, Hilbert transform.


\section{Formulation of the problem and the main result}

The following notion was introduced in \cite{Ag1}.
\begin{definition} Let $K$ be a  bounded domain in $\mathbb R^n.$ Then $K$ is called {\it polynomially integrable} if the Radon transform of its
characteristic function
$$A_K(\xi,t)=R\chi_K(\xi,t)= \int\limits_{K \cap \{ x \cdot \xi=t \} } dx, \ \xi\in S^{n-1}, \ t \in \mathbb R,$$
is a polynomial in $t$:
$$A_K(\xi,t)=\sum\limits_{j=0}^N a_j(\xi)t^j$$
for $t$ such that the hyperplane $x \cdot \xi=t$ intersects $K.$
\end{definition}

Polynomially integrable domains with $C^{\infty}$ boundary were fully characterized in \cite{Ag1}, \cite{KMY}. First, there are no
such domains in $\mathbb R^n$ with even $n$. Secondly, if $n$ is odd then ellipsoidal domains exhaust the class of such domains:
\begin{theorem}[\cite{KMY}] \label{T:KMY} Let $K$ be a bounded domain in $\mathbb R^n$ with an infinitely smooth boundary $\partial K.$ If $K$ is polynomially integrable
then $n$ is odd and $K$ is an ellipsoid.
\end{theorem}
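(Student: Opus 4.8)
The plan is to separate the two conclusions: first the parity statement that $n$ must be odd, and then the rigidity statement that in odd dimensions $K$ is forced to be an ellipsoid.

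\emph{Parity.} Since $\partial K$ is compact and smooth, there is at least one point $p\in\partial K$ of strictly positive Gauss curvature: take a point of $\partial K$ farthest from a fixed interior point, so that near $p$ the boundary lies inside a ball it touches and all principal curvatures are positive. Let $\xi_0$ be the outer unit normal at $p$, so that $h_K(\xi_0)=p\cdot\xi_0$. Writing $\partial K$ locally as a graph over its tangent hyperplane, the slice $\{x\cdot\xi_0=h_K(\xi_0)-\delta\}\cap K$ is, to leading order, the $(n-1)$-dimensional ellipsoid $\{\sum_i\kappa_i y_i^2\le 2\delta\}$, whose volume behaves like $c(\xi_0)\,\delta^{(n-1)/2}$ as $\delta\to 0^+$. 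Hence $A_K(\xi_0,t)\sim c(\xi_0)\,(h_K(\xi_0)-t)^{(n-1)/2}$. A polynomial vanishes only to integer order at any point, so $(n-1)/2\in\mathbb Z$, forcing $n$ odd. This uses only a single point of positive curvature and needs neither convexity nor global control.

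\emph{Setup for rigidity.} Assume now $n$ is odd. I would first upgrade the geometric picture, showing that polynomial integrability together with smoothness forces $K$ to be convex with everywhere strictly positive curvature: the projection of the connected open set $K$ onto the $\xi$-axis is an interval $[t_-(\xi),t_+(\xi)]$, and analyticity of $A_K(\xi,\cdot)$ together with the local model above pins the vanishing order at each endpoint to be exactly $(n-1)/2$, ruling out flat or saddle boundary points. Consequently the globally defined polynomial $B_K(\xi,t)=\sum_{j=0}^N a_j(\xi)t^j$ vanishes to order exactly $(n-1)/2$ at $t=t_\pm(\xi)$, so it factors as $B_K(\xi,t)=[(t_+(\xi)-t)(t-t_-(\xi))]^{(n-1)/2}P(\xi,t)$ with $P(\xi,t_\pm(\xi))\ne 0$ and $\deg_t P=N-(n-1)$. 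The ellipsoid is exactly the case in which $P$ is constant in $t$ and the quadratic $(t_+-t)(t-t_-)$ is a translate and rescaling of $\rho(\xi)^2-t^2$; so the goal becomes to show $N=n-1$ and that $t_\pm(\xi)$ have ellipsoidal form.

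\emph{Mechanism.} The main tool in odd dimensions is that Radon inversion is local:
\[
\chi_K(x)=c_n\int_{S^{n-1}}\partial_t^{\,n-1}A_K(\xi,x\cdot\xi)\,d\xi
\]
in the distributional sense. Because $B_K$ vanishes to order $(n-1)/2\ge 1$ at the endpoints, differentiating the truncated function $B_K\cdot\mathbf 1_{[t_-,t_+]}$ produces boundary terms (derivatives of $\delta$) located at $t=t_\pm(\xi)$, i.e. activated only when $x$ lies on a supporting hyperplane. For $x$ in the open interior $K^\circ$ one has $x\cdot\xi\in(t_-(\xi),t_+(\xi))$ strictly for every $\xi$, so these terms are inert and
\[
1=c_n\int_{S^{n-1}}(\partial_t^{\,n-1}B_K)(\xi,x\cdot\xi)\,d\xi,\qquad x\in K^\circ .
\]
The right-hand side is a genuine polynomial in $x$ of degree $N-(n-1)$; being constant on the open set $K^\circ$, it equals that same constant on all of $\mathbb R^n$. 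Matching homogeneous parts converts this into orthogonality of the top coefficients of $B_K$ to spherical harmonics of the corresponding degrees, which I expect to force $N=n-1$ once combined with the endpoint data. For $x$ in the exterior one has $\chi_K(x)=0$, so the boundary $\delta$-contributions — carried by the curvature of $\partial K$ through the Jacobian of the map $\xi\mapsto$ endpoint, evaluated where $x\cdot\xi=t_\pm(\xi)$ — must exactly cancel the nonzero polynomial constant. This is the integral-geometric identity that encodes the shape of $\partial K$.

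\emph{Main obstacle.} The hard part is this last extraction: turning the exterior cancellation identity into the statement that $t_+(\xi)=\langle c,\xi\rangle+\sqrt{\langle A\xi,\xi\rangle}$ for a fixed vector $c$ and a positive-definite matrix $A$, which is precisely the support function of an ellipsoid. I would attack it by letting $|x|\to\infty$ along exterior rays to isolate the leading endpoint behavior and thereby read off a quadratic relation satisfied by $t_\pm(\xi)$; alternatively one can try to bypass the boundary analysis, once $N=n-1$ is established, by invoking a rigidity theorem for algebraically integrable convex bodies in the spirit of the Newton--Arnold--Vassiliev circle of results to identify ellipsoids as the only possibility. Either way the crux is controlling the endpoint and curvature boundary terms; the polynomial identity on the interior is comparatively soft and serves mainly to bound the degree $N$.
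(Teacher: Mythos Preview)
Your parity argument is correct and is essentially the one sketched in the paper (following \cite{Ag1}): the local asymptotic $A_K(\xi,t)\sim c\,(h_K(\xi)-t)^{(n-1)/2}$ at a point of nonzero Gauss curvature forces $(n-1)/2\in\mathbb Z$.

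The rigidity half, however, is not a proof but a programme with its central step missing. You correctly extract from local Radon inversion the identity $1=c_n\int_{S^{n-1}}(\partial_t^{\,n-1}B_K)(\xi,x\cdot\xi)\,d\xi$ on $K^\circ$, but you then acknowledge that ``the hard part is this last extraction'' and offer only two tentative routes (asymptotics along exterior rays, or an unspecified Arnold--Vassiliev rigidity result) without executing either. Neither the degree bound $N=n-1$ nor the quadratic form of $t_\pm(\xi)$ is actually established. The exterior cancellation you describe involves distributional boundary terms weighted by curvature, and converting their vanishing into the statement that $h_K(\xi)$ is the square root of a quadratic form is not a computation you have carried out; it is the whole difficulty.

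The argument in \cite{KMY}, whose mechanism this paper reproduces in its proof of $(iii)\Rightarrow(iv)$, avoids the endpoint analysis altogether. One computes the fractional derivatives of $A_K(\xi,\cdot)$ at $t=0$ and identifies them, via the Fourier transform of homogeneous distributions, with constants times $\big(\|x\|_K^{-n+1+q}\big)^\wedge(\xi)$. Since $A_K(\xi,\cdot)$ is a polynomial of degree at most $N$, its derivatives of order $m>N$ vanish identically in $\xi$, which forces the Fourier transform of suitable even/odd combinations of $\|x\|_K^{-n+1+m}$ to be supported at the origin; hence those combinations are homogeneous polynomials in $x$ for all large $m$. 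A separate algebraic lemma (Theorem~3.7 of \cite{KMY}) then shows that the only convex bodies whose Minkowski functional has this property are ellipsoids. The key difference from your outline is that the Fourier-analytic route works globally through $\|\cdot\|_K$ and never needs to isolate or cancel the boundary $\delta$-contributions in the inversion formula; the hard step is pushed into an algebraic statement about powers of the norm rather than an integral-geometric identity on $\partial K$.
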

\begin{remark} Theorem \ref{T:KMY} was formulated in \cite{KMY} for convex bodies $K.$ However, it was proved in \cite{Ag1} that
polynomially integrable domains in $\mathbb R^{2k+1},$ with smooth boundary, are necessarily convex and thus the convexity assumption in Theorem \ref{T:KMY} is superfluous. Also, when $K$ is a convex body, the function $A_K(\xi, t)$ is
continuous with respect to $\xi$, which implies that the coefficients
$a_j(\xi)$ are a priori continuous functions on the unit sphere.
\end{remark}

In this article, we introduce an analog of polynomial integrability in even dimensions.  First of all, there are no polynomially integrable convex domains with smooth boundary in even-dimensional spaces. This was proved in \cite{Ag1} and \cite{KMY} using different arguments.
The proof in \cite{Ag1} relies on the behavior of the sectional volume function $A_K(\xi,t)$ near
 the tangent plane $T_a(\partial K)=\{ x \cdot \xi=t_0 \} $  to the boundary at a point $a \in \partial K$ (see Lemma \ref{L:L1}). The argument is as follows:
 for almost all normal vectors $\xi \in S^{n-1}$  near the tangent plane we have $ A_K(\xi,t)=const \ (t-t_0)^{\frac{n-1}{2}}(1+o(1)), \ t \to t_0.$
 If $n$ is even then $\frac{n-1}{2}$ is half-integer and therefore $A_K(\xi,t)$ cannot be a polynomial in $t.$

 In order to formulate the main result of the article we need some notations.
The support functions of a compact convex body $K \subset \mathbb R^n$ are defined by

\begin{align}
&  h_{K}^{+}(\xi)=h_{K}(\xi)=\max_{ x \in K} x \cdot\xi,\\
&  h_{K}^{-}(\xi)=\min_{ x \in K} x \cdot\xi.
\end{align}
where $\xi$ belongs to the unit sphere  $S^{n-1}$ in $\mathbb R^n.$
Clearly, $h_{K}^{-}(\xi)=-h_K^{+}(-\xi)$ and a hyperplane $ \{x \cdot \xi =t \}$ meets {the interior} of  $K$ if and only if $t \in I_{\xi}:= \big(h_K^{-}(\xi), h_K^{+}(\xi) \big).$

Denote by $\mathcal{H}$ the Hilbert transform
\begin{equation}\label{E:H}
\mathcal{H} f(t)= \frac{1}{\pi} p.v. \int\limits_{\mathbb R}  \frac{f(s)}{t-s} ds
\end{equation}
of a continuous function $f$ with sufficiently fast decay at infinity.

The main result of this article is as follows.
\begin{theorem} \label{T:Main} Let $n$ be an even positive integer. Let $K$ be a bounded convex domain in $\mathbb R^n$ with $C^{\infty}$ boundary $\partial K.$
The following are equivalent:
\begin{enumerate}[(i)]
\item The sectional volume function $A_K(\xi,t)$ has for $t \in I_{\xi}$ the form
$$A_K(\xi,t)=\sqrt{q(\xi,t)} P(\xi,t),$$
where $P(\xi,t), \ q(\xi,t)$ are {continuous in $\xi$} and polynomials in $t$ with $\deg q(\xi,\cdot) =2 ; \ q(\xi,t) > 0, t \in I_{\xi}.$
\item The sectional volume function $A(\xi,t)$ has for $t \in I_{\xi}$ the form
$$A_K(\xi,t)=\frac{P(\xi,t)}{\sqrt{q(\xi,t)}},$$
where $P(\xi,t), \ q(\xi,t)$ are as in (i).
\item The Hilbert transform $\mathcal{H}A_K(\xi,t)$ is a polynomial with respect to   $t \in I_{\xi}$ for each  $\xi\in S^{n-1}$, i.e.,
$$\mathcal{H}A_K(\xi,t)=\sum\limits_{j=0}^N b_j(\xi)t^j,$$
where   $N$ is an integer and  $b_j$ are some (a priori continuous) functions on the unit sphere.
\item $K$ is an ellipsoid.
\end{enumerate}

\end{theorem}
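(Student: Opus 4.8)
The plan is to prove the cycle (iv) $\Rightarrow$ (i) $\Rightarrow$ (iii) $\Rightarrow$ (iv) together with the side equivalence (i) $\Leftrightarrow$ (ii), so that all four conditions coincide. The three implications not ending in (iv) are essentially computational, and the whole difficulty is concentrated in (iii) $\Rightarrow$ (iv).

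First I would verify (iv) $\Rightarrow$ (i). Writing the ellipsoid as $K = T B^n + v$ with $T$ invertible linear and $v\in\mathbb R^n$, every section $K\cap\{x\cdot\xi=t\}$ is an $(n-1)$-dimensional ellipsoid, and the affine volume formula gives
\begin{equation*}
A_K(\xi,t) = c(\xi)\,\big[(h_K^+(\xi)-t)(t-h_K^-(\xi))\big]^{(n-1)/2}, \qquad t\in I_\xi,
\end{equation*}
with $c(\xi)>0$ continuous. Since $n$ is even, $(n-1)/2 = m+\tfrac12$ with $m=(n-2)/2\in\mathbb Z_{\ge0}$, so taking $q(\xi,t)=(h_K^+(\xi)-t)(t-h_K^-(\xi))$ and $P(\xi,t)=c(\xi)\,q(\xi,t)^{m}$ puts $A_K$ in the form (i). For (i) $\Leftrightarrow$ (ii) I would use the tangential asymptotics of Lemma \ref{L:L1}: as $t\to h_K^\pm(\xi)$ one has $A_K(\xi,t)\sim \mathrm{const}\,(h_K^\pm(\xi)-t)^{(n-1)/2}$, a half-integer power. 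Hence in either representation the positive quadratic $q$ must vanish at both endpoints of $I_\xi$, which forces $q(\xi,t)=\alpha(\xi)(h_K^+(\xi)-t)(t-h_K^-(\xi))$; comparing orders of vanishing then shows that one passes between (i) and (ii) simply by multiplying or dividing by $q$, a polynomial operation in $t$.

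The equivalence (i) $\Leftrightarrow$ (iii) is where the Hilbert transform enters, and it rests on the spectral behavior of the finite Hilbert transform. Because $A_K(\xi,\cdot)$ is supported on $I_\xi$, the transform in (iii) is the finite Hilbert transform on that interval; after the affine substitution carrying $I_\xi$ onto $[-1,1]$, the forms (i) and (ii) become $\sqrt{1-\tau^2}\,p(\tau)$ and $p(\tau)/\sqrt{1-\tau^2}$ respectively. I would then invoke the classical Chebyshev identities
\begin{gather*}
\frac{1}{\pi}\,\mathrm{p.v.}\!\int_{-1}^{1}\frac{\sqrt{1-s^2}\,U_k(s)}{t-s}\,ds = T_{k+1}(t),\\
\frac{1}{\pi}\,\mathrm{p.v.}\!\int_{-1}^{1}\frac{T_k(s)}{\sqrt{1-s^2}\,(t-s)}\,ds = -U_{k-1}(t),
\end{gather*}
valid for $|t|<1$ (with $U_{-1}\equiv 0$), which immediately give (i) $\Rightarrow$ (iii) and (ii) $\Rightarrow$ (iii). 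For the converse I would apply the S\"ohngen--Tricomi inversion of the finite Hilbert transform: if $\mathcal H A_K(\xi,\cdot)$ equals a polynomial, the inversion formula reconstructs $A_K(\xi,\cdot)$, and the Chebyshev identities show the reconstruction has the form (i)/(ii). The solvability and index condition needed for the inversion are guaranteed by the facts that $A_K(\xi,\cdot)$ is continuous, vanishes at the endpoints of $I_\xi$, and is a genuine section function; all identities are pointwise in $\xi$, so continuity of the coefficients follows from that of $A_K$ and of $h_K^\pm$.

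The implication (iii) $\Rightarrow$ (iv) is the crux, and my plan is to reduce it to the odd-dimensional Theorem \ref{T:KMY}. The heuristic is that in even dimensions the finite Hilbert transform of the section function plays exactly the role that the section function itself plays in odd dimensions: the operator $\mathcal H\partial_t$ has Fourier multiplier $|\tau|$ and is the half-order, dimension-parity-changing companion of the local inversion available in odd dimensions. Accordingly I would seek to manufacture from $K$ an auxiliary convex body in an odd-dimensional space whose section function is controlled by $\mathcal H A_K$, so that condition (iii) becomes literal polynomial integrability there; Theorem \ref{T:KMY} then forces that body to be an ellipsoid, and unwinding the construction forces $K$ itself to be an ellipsoid. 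The main obstacle is to make this transfer rigorous. Because $\mathcal H\partial_t$ is nonlocal, one cannot read off the transferred body by a pointwise recipe; one must instead control the global behavior, the positivity, and the $C^\infty$-regularity of the auxiliary object, and, crucially, verify that its tangential vanishing is of the correct integer order so that the hypotheses of \cite{KMY} genuinely hold. As a fallback, if a clean dimensional transfer resists, I would argue directly in the spirit of \cite{Ag1, KMY}: from the representation (i) use the endpoint asymptotics to constrain the degree of $P$ (it equals $n-2$), then exploit the compatibility of the coefficient functions as $\xi$ ranges over $S^{n-1}$ to derive a differential/functional equation for the support function $h_K$ whose only solutions are quadrics. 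Finally (iv) $\Rightarrow$ (i), already proved, closes the cycle and yields the full equivalence.
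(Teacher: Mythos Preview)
Your treatment of (iv) $\Rightarrow$ (i), (i) $\Leftrightarrow$ (ii), and (i),(ii) $\Leftrightarrow$ (iii) is sound and matches the paper's structure. The paper does the Hilbert-transform equivalence via the intertwining relation $\mathcal H(s\varphi)(t)=t\,\mathcal H\varphi(t)+\mathrm{const}$ together with the finite-interval inversion formula, rather than via the Chebyshev identities, but your route is a legitimate and arguably cleaner variant of the same computation; either choice gives the equivalence of (i), (ii), (iii) without difficulty.

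The gap is in (iii) $\Rightarrow$ (iv). Your primary plan is to build an auxiliary convex body $\widetilde K$ in an odd-dimensional space with $A_{\widetilde K}$ controlled by $\mathcal H A_K$, and then invoke Theorem~\ref{T:KMY}. You yourself flag the obstacles---nonlocality of $\mathcal H\partial_t$, positivity, smoothness, correct endpoint vanishing---and in fact no such construction is carried out in the paper or, to my knowledge, elsewhere; there is no reason to expect the Hilbert transform of a section function to be the section function (or a simple transform of one) of any convex body. Your fallback, extracting a functional equation for $h_K$ from the coefficient compatibility in (i), is not a proof sketch but a hope: you have not indicated what the equation is or why quadrics are its only solutions, and the analogous step in \cite{Ag1,KMY} relies on tools specific to the odd-dimensional, genuinely polynomial situation.

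The paper's argument for (iii) $\Rightarrow$ (iv) is quite different and worth knowing. One computes the fractional derivative $(\mathcal H A_K)^{(q)}(\xi,0)$ and shows, via the Fourier slice theorem and a distributional calculation, that
\[
(\mathcal H A_K)^{(q)}(\xi,0)=\frac{1}{2\pi(n-1-q)}\Big(e^{-i(1+q)\pi/2}\|{-x}\|_K^{-n+1+q}+e^{i(1+q)\pi/2}\|x\|_K^{-n+1+q}\Big)^{\wedge}(\xi).
\]
If $\mathcal H A_K(\xi,\cdot)$ is a polynomial of bounded degree, then for all large integers $m$ the left side vanishes identically in $\xi\neq 0$, so the Fourier transform on the right is supported at the origin. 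Hence $\|{-x}\|_K^{-n+1+m}+(-1)^{m+1}\|x\|_K^{-n+1+m}$ is a homogeneous polynomial in $x$ for all large $m$. At that point one is exactly in the algebraic situation of \cite[Theorem~3.7]{KMY}, whose argument (in even dimension) forces $\|\cdot\|_K$ to come from a quadratic form, i.e.\ $K$ is an ellipsoid. The key idea you are missing is this Fourier-analytic link between derivatives of $\mathcal H A_K$ at $t=0$ and powers of the Minkowski functional; without it there is no bridge from (iii) to the algebraic input of \cite{KMY}.
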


\section{Proof of Theorem \ref{T:Main}. Equivalence of conditions $(i), (ii),(iii)$} \label{S2}

We start with some preliminary facts.

\subsection{Boundary behavior of the sectional volume function}

In the case where $K$ is an ellipsoid, the support function $h_K(\xi)$ is the restriction to the unit sphere $|\xi|=1$ of the square root of
a quadratic polynomial. In fact, for the ellipsoid  $E$  written in suitable coordinates in the standard form
\[
  E=\left\{  \sum_{j=1}^{n}\frac{x_{j}^{2}}{a_{j}^{2}} \le 1\right\}
\]
we have
\[
h_{E}(\xi)=\sqrt{\sum_{j=1}^{n}a_{j}^{2}\xi_{j}^{2}}.
\]
Also one can check  that
\begin{equation}\label{AE}
A_E(\xi, t) = C_n \mathrm{Vol}_n(E)\, h_E^{-n}(\xi)\left(h_E^2(\xi)-t^2\right)^{(n-1)/2},\end{equation} for a certain constant $C_n$, and all $\xi$ and $t$ such that $x\cdot \xi =t$ intersects $E$. {It follows that if $n$ is odd  then $A_K(\xi,t)$ is a polynomial in $t$ and if $n$ is even then
$A_K(\xi,t)$ has the form $(ii)$ in Theorem \ref{T:Main} with $q(\xi,t)=h_E^2(\xi)-t^2.$}

A hyperplane $\{x\cdot\xi=t\}$ meets the domain $K$ if and only if
$t \in I_{\xi}=[h_K^{-}(\xi), \  h_{K}^+(\xi)]$ and the end points $t=h^{\pm}_K (\xi)$ of the segment $I_{\xi}$ correspond to the tangent hyperplanes
$$T_{a^{\pm}}(\partial K)=\{ x \cdot \xi=h_K^{\pm}(\xi)\} $$ at
the points $a^{\pm}\in\partial K$ such that the exterior unit normal vectors $\nu_{\partial K}(a^{\pm})$
are correspondingly $\nu_{\partial K}(a^{\pm})=\pm\xi.$

The behavior of the sectional volume function $A_K(\xi,t)$ near the tangent planes is given by the following Lemma (see \cite[Ch. 1, Section 1.7]{GGV},  \cite[Section 3, p.7]{Ag1},  \cite[Lemma 2.2]{AgK}).

\begin{lemma} \label{L:asymp}
\label{L:zeros} There is a  dense subset $\Sigma \subset S^{n-1},$ such that
the following asymptotic relation with respect to $t$ holds with some nonzero coefiicients
$c^{\pm}(\xi),$ non-vanishing for  $\xi  \in \pm \Sigma$, correspondingly:
\begin{align}
&A_K(\xi,t)=c^{+}(\xi)(h_{K}^{+}(\xi)-t) ^{\frac{n-1}{2}}(1+\mathit{o}(1)),\ t\rightarrow h_{K}^{+}(\xi)-0, \ \xi \in \Sigma, \\
&A_K(\xi,t)=c^{-}(\xi)(t-h_{K}^{-}(\xi))^{\frac{n-1}{2}}(1+\mathit{o}(1)),\ t \rightarrow h_{K}^{-}(\xi) + 0, \ \xi \in -\Sigma.
\end{align}


\end{lemma}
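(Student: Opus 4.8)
The plan is to localize near each supporting point and reduce the slice-volume computation to a quadratic (Gaussian) model governed by the second fundamental form of $\partial K$. Fix $\xi \in S^{n-1}$ and let $a^+ = a^+(\xi) \in \partial K$ be the point where $x \mapsto x\cdot\xi$ attains its maximum $h_K^+(\xi)$, so the exterior unit normal there is $\xi$. After a rigid motion I may assume $\xi = e_n$ and that $a^+$ lies on the $x_n$-axis with horizontal tangent plane, so that near $a^+$ the boundary is the graph $x_n = \phi(x')$, $x'=(x_1,\dots,x_{n-1})$, of a $C^{\infty}$ concave function attaining its maximum $h_K^+(\xi)$ at $x'=0$ with $\nabla\phi(0)=0$. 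By Taylor's formula,
\[
\phi(x') = h_K^+(\xi) - \tfrac12\langle H x', x'\rangle + o(|x'|^2), \qquad x'\to 0,
\]
where $H = -D^2\phi(0)$ is positive semidefinite; since $\nabla\phi(0)=0$, the determinant $\det H$ equals the Gaussian curvature $\kappa(a^+)$ of $\partial K$ at $a^+$.

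For $t$ just below $h_K^+(\xi)$, the whole of $A_K(\xi,t) = \mathrm{Vol}_{n-1}\bigl(\{x':\phi(x')\ge t\}\bigr)$ comes from a shrinking neighborhood of $a^+$. Writing $s = h_K^+(\xi)-t \to 0^+$, the relevant super-level set is $\{x':\tfrac12\langle Hx',x'\rangle \le s + o(|x'|^2)\}$. When $\kappa(a^+)=\det H>0$, the change of variables $z = H^{1/2}x'$ gives
\[
\mathrm{Vol}_{n-1}\bigl(\{x':\tfrac12\langle Hx',x'\rangle \le s\}\bigr) = \frac{2^{(n-1)/2}\,\omega_{n-1}}{\sqrt{\det H}}\; s^{(n-1)/2},
\]
with $\omega_{n-1}$ the volume of the unit ball in $\mathbb R^{n-1}$. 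This yields the first asymptotic relation with $c^+(\xi) = 2^{(n-1)/2}\omega_{n-1}/\sqrt{\kappa(a^+)} \ne 0$. The second relation is obtained identically at the minimizer $a^-(\xi)$, whose exterior normal is $-\xi$ and whose leading coefficient is governed by $\kappa(a^-(\xi))$; this is why the roles of $\Sigma$ and $-\Sigma$ are interchanged in the two displays of the Lemma.

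It remains to produce the dense set $\Sigma$ on which $c^{\pm}(\xi)\ne 0$. I take $\Sigma = S^{n-1}\setminus \nu(Z)$, where $\nu:\partial K \to S^{n-1}$ is the Gauss map and $Z = \{a\in\partial K: \kappa(a)=0\}$ its critical set. Because the differential of $\nu$ is the Weingarten map, whose determinant is exactly $\kappa$, Sard's theorem shows that $\nu(Z)$ has measure zero, so $\Sigma$ has full measure and in particular is dense. For $\xi\in\Sigma$ the direction $\xi$ is realized as a normal only at points of positive curvature, hence $\kappa(a^+(\xi))>0$ and $c^+(\xi)\ne 0$; applying this to $-\Sigma$ handles the bottom points.

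The main obstacle is the uniform control of the remainder $o(|x'|^2)$, needed to justify that the rescaled super-level sets converge in volume to the limiting ellipsoid. I would handle this by sandwiching the super-level set between the ellipsoids $\{\tfrac12\langle(H\pm\varepsilon I)x',x'\rangle\le s\}$ on a fixed small ball, noting that the nondegeneracy estimate $\phi(x')\le h_K^+(\xi)-c|x'|^2$ off that ball makes the outside contribution negligible once $s$ is small, and then letting $\varepsilon\to 0$. The only other point of genuine content — that the positively curved directions are dense — is precisely the Sard argument above.
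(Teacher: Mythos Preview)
Your proof is correct and follows essentially the same route as the paper's: both define $\Sigma$ as the regular values of the Gauss map via Sard's theorem, pass to a local graph $x_n=\phi(x')$ at the supporting point, use the second-order Taylor expansion governed by the second fundamental form, and read off the $(h_K^+(\xi)-t)^{(n-1)/2}$ asymptotic from the volume of the osculating ellipsoid (your constant $2^{(n-1)/2}\omega_{n-1}/\sqrt{\kappa}$ matches the paper's $(2\pi)^{(n-1)/2}/\bigl(\Gamma(\tfrac{n+1}{2})\sqrt{\kappa}\bigr)$). Your sandwiching argument for the remainder is a slightly more explicit version of what the paper leaves implicit, but the strategy is identical.
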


\begin{proof}
We will use the notation $\Gamma=\partial K.$ Then $\Gamma$ is an
infinitely differentiable closed hypersurface. Let $\kappa_{\Gamma} (a), \ a \in\Gamma$ be the
Gaussian curvature of $\Gamma$ at the point $a.$

Denote by $\gamma$ the Gauss mapping
\[
\gamma:\Gamma\ni a\rightarrow\nu_{\Gamma}(a)\in S^{n-1},
\]
which maps a point $a\in\Gamma$ to the exterior unit normal vector
$\gamma(a)=\nu_{\Gamma}(a)$ to $\Gamma$ at the point $a.$  The mapping $\gamma$ is differentiable and the
Gaussian curvature $\kappa_{\Gamma}(a)$ is equal to the  Jacobian determinant
$\kappa_{\Gamma}(a)=J_{\gamma}(a)$ of $\gamma$ at the point $a.$ Therefore, the points
$a$ with $\kappa_{\gamma}(a)\neq0$ (\textit{non-degenerate points}) constitute
the set $\mathrm{Reg}_{\gamma}$ of regular points of the mapping $\gamma,$
while the set of points $a$ of zero Gaussian curvature coincides with the
critical set $\mathrm{Crit}_{\gamma}.$

By Sard's theorem,
the set $\gamma(\mathrm{Crit}_{\gamma})$ has the Lebesgue
measure zero on $S^{n-1}$, while the set
$$\Sigma =S^{n-1}  \setminus \gamma(\mathrm{Crit}_{\gamma})$$
of regular values is a dense subset of $S^{n-1}.$ It consists
of the unit vectors  $\xi$ such that any point $ a \in \Gamma$ with
$\nu_{\Gamma}(a) =\xi $ is non-degenerate.

Let $\xi\in \Sigma$ and let $a \in \Gamma$ be such that $a \cdot \xi  = h_K^+(\xi).$  The hyperplane $x \cdot \xi=h_K^{+}(a)$ is tangent to
$\Gamma$ and hence the external normal unit vector $\gamma(a)=\nu_{\Gamma}(a)=\xi.$ Since $\xi$ is a regular value of $\gamma,$ the point $a$ is non-degenerate, i.e., $\kappa_{\Gamma}(a) \neq 0.$
Applying a suitable translation and an orthogonal
transformation, we can make $a=0$ and $\xi=(0,\ldots,0,1).$ Then the
tangent plane $T_{a}(\Gamma)$ is the coordinate plane $x_{n}=0$ and the domain
$K$ is contained in the half-space $x_{n}\le 0.$ In this case $h_K^{+}(\xi)=0.$ Moreover, after performing a suitable non-degenerate linear
transformation we can make the equation of $\Gamma,$ near $a=0,$ to be:%

\begin{equation}
\label{E:x_n=}x_{n}=-\frac{1}{2}\left(  c_{1}x_{1}^{2}+\cdots+c_{n-1}%
x_{n-1}^{2}\right)  +\mathit{o}\left(  |x^{\prime}|^{2}\right)  ,\ (x_{1}%
,\ldots,x_{n-1})=x^{\prime}\rightarrow0.
\end{equation}
The new axes $x_{j}$, {$j=1,\ldots,n-1$}, are the   directions of the vectors of
principal curvatures and the coefficients $c_{j}$ are the values of the
principal curvatures at the point $a=0\in\Gamma.$ The Gaussian curvature at
$a=0$ is $\kappa_{\Gamma}(0)=c_{1}\cdots c_{n-1}$. All the applied
transformations preserve regular points, hence $\kappa_{\Gamma}(0)\neq 0$.
Therefore, none of $c_{j}$'s are equal to zero, and, since $c_{j}\geq 0$ due to the
convexity of $\Gamma,$ we have $c_{j}>0$ for all $j.$

After the above transformations we have $\xi=(0, \ldots, 0,1),$ so the
hyperplane $x\cdot\xi=t$ is now given by the equation $x_{n}=t,$ with
$t<0.$ The main term of $\mathrm{Vol}_{n-1}(K \cap\{x_{n}=t\})$ near
$t=0$ is determined by the main term of the expansion (\ref{E:x_n=}), i.e., by
the volume of the ellipsoid $-2t=c_{1}x_{1}^{2}+\cdots+c_{n-1}x_{n-1}^{2},$
which is equal to $c(-t)^{\frac{n-1}{2}}$, where
$
 c=\frac{(2\pi)^{\frac{n-1}{2}}}{\Gamma(\frac{n+1}%
{2})\sqrt{\kappa_{\Gamma}(a)}}.
$

Thus, for the specific choice $a=0$ and $\xi=(0,\ldots,0,1),$ we have the
following asymptotic formula:
$$A_K(\xi,t)=\mathrm{Vol}_{n-1}(K\cap
\{x_{n}=t\})=c \ (-t)^{\frac{n-1}{2}}+o(|t|^{\frac{n-1}{2}}),t\rightarrow-0,
$$
near $(\xi,t_0)$ with $\xi=(0,0,...,0,1)$ and $t_0= h_K^{+}(\xi)=0.$ Performing the inverse affine transformation, we obtain the first asymptotic formula in
Lemma \ref{L:asymp},  with some new nonzero
constant $c^{+}$ depending, of course, on $\xi$.

{The second asymptotic relation follows from the first one and from the relations $h_K^{+}(-\xi)=-h_K^{-}(\xi), \ A_K(-\xi, -t) =A_K(\xi,t).$}

\end{proof}
\smallbreak

Lemma \ref{L:asymp} implies an explicit form of the quadratic polynomial $q$ in conditions $(i), \ (ii)$ of Theorem \ref{T:Main}, as follows:
\begin{lemma} \label{L:L1}  Let $n \geq 2$ be an even integer, and let $K$ be a bounded convex body in $\mathbb R^n $ with $C^{\infty}$ boundary $\partial K.$
Let $q(\xi,t)$ be a quadratic polynomial of $t$ in  condition $(i)$ or in condition $(ii)$ of Theorem \ref{T:Main}.
Then $q(\xi,t)=q_0(\xi) \big(h_K^+(\xi)-t \big) \big (t-h_K^{-}(\xi) \big).$
\end{lemma}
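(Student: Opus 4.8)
The plan is to read off the two roots of the quadratic $q(\xi,\cdot)$ directly from the boundary asymptotics of Lemma \ref{L:asymp}. I would fix a direction $\xi$ in the set $\Sigma \cap (-\Sigma)$. Since each of $\Sigma$ and $-\Sigma$ is the complement of a set of measure zero on $S^{n-1}$, this intersection is dense (in fact of full measure), and for such $\xi$ both asymptotic relations of Lemma \ref{L:asymp} are simultaneously available. Write $t_\pm = h_K^\pm(\xi)$ for the endpoints of $I_\xi$.

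The heart of the argument is a parity mismatch. Because $n$ is even, the exponent $(n-1)/2$ appearing in Lemma \ref{L:asymp} is a genuine half-integer, whereas $P(\xi,\cdot)$ is a polynomial and so vanishes, near any point, to an integer order. Consider the top endpoint $t_+$. If $q(\xi,t_+) \neq 0$, then $\sqrt{q(\xi,\cdot)}$ is analytic and nowhere zero in a neighborhood of $t_+$; consequently, in case $(i)$ the product $\sqrt{q}\,P$ and in case $(ii)$ the quotient $P/\sqrt{q}$ would vanish at $t_+$ to an integer order (or vanish identically, which is excluded since $A_K > 0$ on $I_\xi$). Neither possibility can reproduce the nonzero half-integer asymptotic $c^+(\xi)(t_+ - t)^{(n-1)/2}$ forced by Lemma \ref{L:asymp}. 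This contradiction shows $q(\xi,t_+) = 0$, and the same reasoning applied at $t_-$, using the second relation of Lemma \ref{L:asymp}, yields $q(\xi,t_-) = 0$.

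Next I would use the structure of $q$. By hypothesis $\deg q(\xi,\cdot) = 2$ and $q(\xi,t) > 0$ for $t \in I_\xi = (t_-, t_+)$, so a quadratic positive on this open interval can vanish only at its endpoints. Having shown that both endpoints are roots, and noting that they are distinct because $I_\xi$ is a nondegenerate interval, I conclude that $t_+$ and $t_-$ are exactly the two (simple) roots of $q(\xi,\cdot)$. Therefore $q(\xi,t) = q_0(\xi)(t_+ - t)(t - t_-)$ for some $q_0(\xi) \neq 0$, and the positivity of $q$ on $I_\xi$ fixes the sign $q_0(\xi) > 0$. This gives the desired identity $q(\xi,t) = q_0(\xi)\big(h_K^+(\xi) - t\big)\big(t - h_K^-(\xi)\big)$ for every $\xi \in \Sigma \cap (-\Sigma)$.

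To finish, I would extend this from the dense set $\Sigma \cap (-\Sigma)$ to all of $S^{n-1}$ by continuity: $q(\xi,t)$ is continuous in $\xi$ by hypothesis and $h_K^\pm$ are continuous, so both sides of the identity are continuous in $\xi$ for each fixed $t$, and equality on a dense set propagates to every $\xi$; in particular $q_0(\xi)$, being minus the leading coefficient of $q(\xi,\cdot)$, is continuous and strictly positive. I do not anticipate a serious obstacle once Lemma \ref{L:asymp} is granted—the whole proof hinges on the single fact that a half-integer power cannot be manufactured from a polynomial $P$ unless $\sqrt{q}$ supplies it, i.e. unless $q$ vanishes at the endpoint. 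The only points demanding care are restricting to $\Sigma \cap (-\Sigma)$ so that both endpoint asymptotics hold at once, and the concluding continuity argument.
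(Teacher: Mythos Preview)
Your proposal is correct and follows essentially the same route as the paper: use the endpoint asymptotics of Lemma~\ref{L:asymp} to force $q(\xi,\cdot)$ to vanish at $h_K^\pm(\xi)$ via a parity mismatch, then extend from a dense set by continuity. The only cosmetic differences are that the paper squares the relation (writing $P^2=qA_K^2$ and comparing the odd multiplicity $2m-1$ of $A_K^2$ with the even multiplicity of $P^2$) and reduces case~$(i)$ to case~$(ii)$, whereas you argue directly that $\sqrt{q}$ would be smooth and nonvanishing near $t_\pm$ and treat $(i)$ and $(ii)$ in parallel; these are equivalent formulations of the same idea.
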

{\bf Proof.}
Let $n=2m.$ Let us start with the case $(ii):$
\begin{equation}\label{E:2}\sqrt{q(\xi,t)} A_K(\xi,t)=P(\xi,t),
\end{equation}
where $P$ is a polynomial in $t.$

By Lemma \ref{L:asymp},  there is a dense set $\Sigma \in S^{n-1}$ such that the function $t \to A_K^2 (\xi,t)$ vanishes at the points
$h_K^{\pm}(\xi)$ (when $\xi \in \pm \Sigma,$ respectively) to the order exactly $2\frac{n-1}{2}=2m-1.$
Therefore, for any  $\xi \in \Sigma$ we have
$$P^2(\xi,t)=q(\xi,t)A^2(\xi,t)=q(\xi,t) \big ( h_K^{+}(\xi)-t \big)^{2m-1}P_0(\xi,t),$$
where $P_0(\xi,t)$ is another polynomial with respect to $t$ and $P_0 \big( \xi, h_K^{+}(\xi) \big) \neq 0.$
Then $P^2(\xi,t)$ has zero at $t=h_K^{+}(\xi),$ of even multiplicity. Comparing the multiplicities at both sides of the equality , we obtain  $q(\xi, h_K^{+}(\xi) )=0.$ Since $\Sigma$ is a dense subset of $S^{n-1}$ and $q(\xi,t), h_K^+(\xi)$ are continuous with respect to $\xi,$ this is true for all $\xi \in S^{n-1}.$

A similar argument using the expansion from Lemma \ref{L:asymp} at the point $h_K^{-}(\xi)$ implies that $q(\xi, h_K^{-}(\xi))=0, \xi \in S^{n-1}.$ Since $q(\xi,t)$ is a quadratic polynomial in $t,$  the needed presentation for $q(\xi,t)$ follows.

The case (i) easily reduces to $(ii)$. Indeed, if  $A_K(\xi,t)=\sqrt{q(\xi,t)}P(\xi,t)$ then $\sqrt{q(\xi,t)} A_K(\xi,t)=q(\xi,t)P(\xi,t)$ and this is the case $(ii)$ because in the right hand side we have  a polynomial in $t$. The lemma is proved.

\qed

\subsection{Functions with polynomial Hilbert transform on a finite interval}

We will need  some facts about the Hilbert transform (\ref{E:H}). This transform is originally  defined on continuous functions with sufficiently fast decay at infinity, but can be extended to less decaying functions and also to distributions.
The Hilbert transform $\mathcal{H}$  is self-invertible; more precisely $\mathcal{H(H}F)=-F.$ We have the
following intertwining relation between the transform $\mathcal H$ and the operator of multiplication by the independent variable (see \cite[Section 4.7]{King}) :
\begin{equation}
 \mathcal{H}(s\varphi(s)) (t)=t   \mathcal{H}%
\varphi  (t)-\frac{1}{\pi}\int\limits_{\mathbb{R}}\varphi(s) \, ds\text{.}
\label{E:intertwine}%
\end{equation}
Let $\chi_{\lbrack a,b]}(s)$ be the characteristic function of the interval
$[a,b].$ The Hilbert transform of the function $\chi_{\lbrack-1,1]}%
(s)\sqrt{(1-s)(1+s)}$ is well-known (see \cite[formula 11.343]{King}):%
\[
\mathcal{H} \left(  \chi_{\lbrack-1,1]}(s)\sqrt{(1-s)(1+s)} \right) (t)=t,\quad t\in\lbrack -1,1].
\]
By a linear change of variables one obtains the Hilbert transform of
$\chi_{\lbrack a,b]}(s)\sqrt{(b-s)(s-a)}$:%

\begin{equation}\label{E:Hilbert_arch}
  \mathcal{H}\left(  \chi_{\lbrack a,b]}(s)\sqrt{(b-s)(s-a)}\right)
  (t)=t-\frac{b+a}{2},\quad t\in\lbrack a,b]. %
\end{equation}
We will also make use of the inversion formula for the Hilbert transform on a finite interval ({\bf finite  Hilbert
transform}).
Namely, if a continuous function $F(t)$ is
supported on an interval $[a,b]$, then $F$ can be recovered from the knowledge of the values of its Hilbert transform only on $[a,b].$ The corresponding inversion formula looks as follows (see, e.g., \cite{you2006explicit}) :
\begin{equation}\label{E:finite_Hilbert}
\sqrt{(b-t)(t-a)} F(t)= - \mathcal{H} \bigg( \chi_{[a,b]}(s) \mathcal{H}F (s)\sqrt{(b-s)(s-a)}  \bigg ) (t) +\frac{1}{\pi}\int\limits_{a}^{b}F(s) \ ds , \ t\in  [a,b].
\end{equation}

\begin{lemma} \label{L:Hilbert} Let $[a,b]$ be a segment on the real line and let $F$ be a continuous function on the real line, supported in the segment $[a,b].$
Then the  following properties are equivalent:
\begin{enumerate}[(a)]
\item The function
$\sqrt{(b-t)(t-a)} F(t)$ is a polynomial on the interval  $t \in (a,b)$.
\item The function $\frac{F(t)}{\sqrt{(b-t)(t-a)}}$ is a polynomial on the interval $t \in (a,b)$.
\item The Hilbert transform
$\mathcal{H}F (t)$ is a polynomial on the interval $t \in (a,b).$
\end{enumerate}
\end{lemma}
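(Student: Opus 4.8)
The plan is to prove the cyclic chain of implications $(b)\Rightarrow(c)\Rightarrow(a)\Rightarrow(b)$. Throughout I write $w(t)=\sqrt{(b-t)(t-a)}$ for the weight, so that condition $(a)$ reads $wF=\text{polynomial}$, condition $(b)$ reads $F=w\cdot\text{polynomial}$, and $w^2(t)=(b-t)(t-a)$. The engine driving all three implications is the single claim that for every polynomial $p$ the function $\mathcal H\big(\chi_{[a,b]}(s)\,w(s)\,p(s)\big)(t)$ is again a polynomial in $t$ on $(a,b)$. Once this claim is available, $(b)\Rightarrow(c)$ is immediate by linearity, and the two remaining implications follow from the inversion formula (\ref{E:finite_Hilbert}) and from an elementary endpoint argument, respectively.

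To prove the claim I would argue by induction on $\deg p$, using the monomials $\varphi_k(s)=\chi_{[a,b]}(s)\,w(s)\,s^k$. The base case $k=0$ is exactly formula (\ref{E:Hilbert_arch}), which gives $\mathcal H\varphi_0(t)=t-\tfrac{a+b}{2}$ on $[a,b]$. For the inductive step I would apply the intertwining relation (\ref{E:intertwine}) to $\varphi_{k+1}=s\,\varphi_k$, obtaining $\mathcal H\varphi_{k+1}(t)=t\,\mathcal H\varphi_k(t)-\tfrac1\pi\int_a^b w(s)s^k\,ds$; since the integral is a finite constant and $\mathcal H\varphi_k$ is a polynomial on $(a,b)$ by the inductive hypothesis, so is $\mathcal H\varphi_{k+1}$. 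By linearity of $\mathcal H$ this proves the claim for arbitrary $p$, and hence $(b)\Rightarrow(c)$.

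For $(c)\Rightarrow(a)$ I would feed the hypothesis into the inversion formula (\ref{E:finite_Hilbert}). If $\mathcal HF(t)=R(t)$ is a polynomial on $(a,b)$, then $\chi_{[a,b]}(s)\,\mathcal HF(s)\,w(s)=\chi_{[a,b]}(s)\,R(s)\,w(s)$ is a weight-times-polynomial, so by the claim its Hilbert transform is a polynomial on $(a,b)$; the additive constant $\tfrac1\pi\int_a^b F$ changes nothing, and (\ref{E:finite_Hilbert}) then exhibits $w(t)F(t)$ as a polynomial, which is $(a)$.

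Finally, $(a)\Rightarrow(b)$ is where the continuity and support hypotheses on $F$ enter. Writing $wF=P$ with $P$ a polynomial, continuity of $F$ on $\mathbb R$ keeps $F$ bounded near the endpoints while $w(t)\to0$, so $P(a)=\lim_{t\to a^+}w(t)F(t)=0$ and likewise $P(b)=0$. Thus $(t-a)(t-b)=-w^2(t)$ divides $P$, say $P=w^2R$ with $R$ a polynomial, and dividing by $w>0$ on $(a,b)$ gives $F=wR$, i.e. $F/w=R$ is a polynomial, which is $(b)$. I expect the main subtlety to lie in the bookkeeping of the $(c)\Rightarrow(a)$ step—ensuring that only the values of $\mathcal HF$ on $[a,b]$ are used (as the characteristic function in (\ref{E:finite_Hilbert}) guarantees) and that the induction of the key claim is applied to the correct weight-times-polynomial—rather than in any deep difficulty; the $(a)\Leftrightarrow(b)$ equivalence, by contrast, rests entirely on the endpoint divisibility just described.
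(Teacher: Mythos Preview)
Your proof is correct and shares with the paper the same key ingredients: the inductive claim that $\mathcal H(\chi_{[a,b]}\,w\,s^k)$ is a polynomial on $(a,b)$ (via (\ref{E:Hilbert_arch}) and the intertwining relation (\ref{E:intertwine})), the inversion formula (\ref{E:finite_Hilbert}) for $(c)\Rightarrow(a)$, and the endpoint divisibility argument for $(a)\Rightarrow(b)$. The one genuine difference is in $(b)\Rightarrow(c)$. You observe that if $F=wQ$ on $(a,b)$ with $Q$ a polynomial, then by continuity and the support hypothesis $F=\chi_{[a,b]}\,w\,Q$ \emph{globally}, so $\mathcal HF$ is a polynomial on $(a,b)$ directly by the claim. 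The paper instead substitutes $F=wQ$ into the inversion formula (\ref{E:finite_Hilbert}), applies it a second time to the auxiliary function $G=\chi_{[a,b]}\,w\,\mathcal HF$, deduces that $(b-t)(t-a)\,\mathcal HF(t)$ is a polynomial, and then invokes the logarithmic bound $|\mathcal HF(t)|\le\tfrac{C}{\pi}\ln\tfrac{b-t}{t-a}$ to force divisibility by $(b-t)(t-a)$. Your route is shorter and avoids both the double inversion and the endpoint estimate on $\mathcal HF$; the paper's route, on the other hand, makes the argument for $(b)\Rightarrow(c)$ structurally parallel to $(c)\Rightarrow(a)$ and records the logarithmic behaviour of $\mathcal HF$ explicitly, which is of some independent interest.
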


\begin{proof}

$ (a)\Leftrightarrow (b)$

If (a) holds then $\sqrt{(b-t)(t-a)} F(t)=Q(t), \ t \in (a,b),$ where $Q$ is a polynomial. Since $F(t)$ is continuous at $t=a$ and $t=b$ we have $Q(a)=Q(b)=0$, and by Bezout's theorem $Q(t)=(b-t)(t-a)Q_1(t),$
where $Q_1$ is another polynomial. Thus,$\sqrt{(b-t)(t-a)} F(t)=(b-t)(t-a)Q_1(t),$ $ t \in (a,b)$ and hence $F(t)=\sqrt{(b-t)(t-a)}Q_1(t),$ \ $t \in (a,b),$ which is exactly condition (b).

Conversely, if (b) holds then  $F(t)= \sqrt{(b-t)(t-a)}Q(t)$, $ t \in (a,b), \ Q$ is a polynomial.  Multiplying both sides by $\sqrt{( b-t)(t-a)}$ leads to $\sqrt{(b-t)(t-a)}F(t)=(b-t)(a-t)Q(t)$, \ $ t \in (a,b),$ and therefore (a) holds.

$(c) \Rightarrow (a)$

Suppose that $\mathcal {H} F(t)=P(t), \ t \in (a,b),$ where $P$ is a polynomial.

Then inversion formula  (\ref{E:finite_Hilbert}) reads as
\begin{equation}\label{E:f1}
 \sqrt{(b-t)(t-a)} F(t)= - \mathcal{H} \bigg(\chi_{[a,b]}(s)P(s)\sqrt{(b-s)(s-a)} \bigg ) (t) +\frac{1}{\pi}\int\limits_{a}^{b}F(s) \ ds, \ t\in  [a,b],
\end{equation}
and therefore, to prove (a), it suffices to prove that  the right hand side is a polynomial on the interval $(a,b).$ In turn, it suffices to check this only for monomials
$P(s)=s^k.$

Thus, we need to prove that the Hilbert transform of the function $\chi_{[a,b]}(s)s^k\sqrt{ (b-s)(s-a)}$ is a polynomial.

It is true for $k=0$ because  identity (\ref{E:Hilbert_arch}) yields
$$\mathcal{H} \bigg( \chi_{[a,b]} (s) \sqrt { (b-s)(s-a)} \bigg)(t) =t+c_0, \  t \in [a,b],$$
for a certain constant $c_0.$  For $k>0$ formula (\ref{E:finite_Hilbert}) leads to
$$\mathcal{H} \bigg( \chi_{[a,b]}(s)s^k \sqrt{ (b-s)(s-a)} \bigg )(t)= t \bigg[  \mathcal{H} \bigg( \chi_{[a,b]}(s)s^{k-1}\sqrt{ (b-s)(s-a)} \bigg)(t)+c_k \bigg ],$$
where $c_k$ is a constant.
Thus, by induction, the above two equalities imply that
$$\mathcal{H} \bigg( \chi_{[a,b]}(s)s^k\sqrt{ (b-s)(s-a)} \bigg)(t)$$ is a polynomial
of degree $k+1.$ Thus, the right hand side in (\ref{E:f1}) is a polynomial when $P(s)$ is a monomial of an arbitrary degree and hence this is true for any polynomial $P$ which proves (a).

$(b) \Rightarrow (c)$

 If (b) is fulfilled then $F(t)=\sqrt{(b-t)(t-a)} Q(t), \ t \in (a,b)$ for some polynomial $Q(t).$
 Then inversion formula (\ref{E:finite_Hilbert}) for the finite Hilbert transform on $[a,b]$ can be written as
 $$(b-t)(t-a)Q(t) =- \mathcal{H} \bigg (\chi_{[a,b]}(s) \mathcal{H}F(s)\sqrt{(b-s)(s-a)} \bigg) (t) +c_1, \ t \in [a,b],$$
 where $c_1$ is a constant.

 Denote for convenience $G(s)=\chi_{[a,b]}(s) \mathcal{H}F(s)\sqrt{(b-s)(s-a)}.$ Then for $ t \in [a,b]$ we have
 $$\mathcal{H} G(t)=Q_1(t),$$
 where $Q_1(t)=-(b-t)(t-a)Q(t)+c_1.$
Again, inversion formula (\ref{E:finite_Hilbert}) yields:
$$G(t)\sqrt{(b-t)(t-a)}=-\mathcal {H} \bigg (\chi_{[a,b]}
(s)\sqrt{(b-s)(s-a)} Q_1(s) \bigg)(t) +c_2,$$
with some constant $c_2.$  We have just proven that the expression in the right hand side is a polynomial on $t \in [a,b].$

Substituting the expression for $G$ we arrive at
$$(b-t)(t-a)\mathcal{H}F(t)= P(t), \ t \in (a,b),$$
where $P(t)$ is a polynomial.
 {Since $F(t)$ is bounded on the real line, $|F(t)| \leq C,$ and supported in $[a,b],$ its  Hilbert transform satisfies
$|\mathcal{H}F(t)| \leq \frac{C}{\pi} \  \ln \frac{b-t}{t-a} , \ t \in (a,b).$
Hence  the limits, as $t \to a, \ t \to b,$ of the left hand side of the above equality are equal to zero.}  This implies $P(a)=P(b)=0$ and
hence $P(t)=(b-t)(a-t)P_1(t),$ where $P_1$ is a polynomial. Then  $\mathcal{H}F(t)=P_1(t), \ t \in (a,b)$ and property (c) is proved.
Thus, we have proven that the properties (a), (b), (c) are equivalent. The Lemma is proved.

\end{proof}

\subsection{Equivalence of conditions $(i), (ii),(iii)$ (end of the proof) }

The equivalence of conditions $(i),(ii)$ and $(iii)$ of Theorem \ref{T:Main} follows immediately from Lemma \ref{L:L1} and also from Lemma \ref{L:Hilbert} applied
to $F(t)=A_K(\xi,t), \ a=h_K^{-}(\xi), \ b =h_K^{+}(\xi).$  {Indeed, Lemma \ref{L:L1} gives an explicit form of the quadratic polynomial $q(\xi,t)$ in $(i),(ii)$ and says that conditions $(i),(ii),(iii)$ for $A_K(\xi,t)$ read as conditions $(a),(b),(c),$ respectively, for the function $F(t)$ in Lemma \ref{L:Hilbert}. The latter lemma claims that conditions $(a),(b),(c)$ are equivalent and therefore conditions $(i),(ii),(iii)$ are equivalent, too.}
The proof is completed.

\section{Proof of Theorem \ref{T:Main}. Equivalence of conditions $(iii)$ and $(iv)$}

Let us first show that  $(iv)$ implies $(iii)$.  {Suppose that $(iv)$ holds, i.e., $K$ is an ellipsoid. Applying a translation, if needed, we may assume that the center of the ellipsoid is at the origin, and therefore its section function  $A_K(\xi,t) $  is given by (\ref{AE}). Since $n$ is even,  $A_K(\xi,t)$ satisfies $(ii)$ with $q(\xi,t)=h_K^2(\xi)-t^2.$ It suffices to notice that, as we have proven in the previous section, conditions $(ii)$ and $(iii)$ are equivalent.}

We will now prove that $(iii)$  implies  $(iv)$. Before we start, let us  outline the plan of the proof. Let $K$ be a convex body satisfying $(iii)$. Without loss of generality we may assume that the origin is an interior point of $K$.
Since $\mathcal{H}A_K(\xi,t)$ is a polynomial in $t$ of degree at most $N$, the derivatives of  $\mathcal{H}A_K(\xi,t)$ with respect to $t$ of orders greater than $N$ at $t=0$ are equal to zero. In order to find derivatives of $\mathcal{H}A_K(\xi,t)$ at $t=0$, we will compute its fractional derivatives. The reader is referred to \cite[Section 2.6]{K} for more details about such techniques.
 The next step is to express fractional derivatives of $\mathcal{H}A_K(\xi,t)$ at zero  in terms of the Fourier transform of expressions involving powers of the Minkowski functional of $K$. Recall that the latter is defined by
$$\|x\|_K=\min\{a\ge 0: x\in aK\},\qquad x\in \mathbb R^n.$$
Since ordinary derivatives are obtained by computing fractional derivatives at positive integers, we will get the condition that the Fourier transform of  $ \|-x\|_K^{-n+1+m} +(-1)^{m+1}   \|x\|_K^{-n+1+m}$ must be concentrated at the origin for large enough integers $m$. This implies that  $ \|-x\|_K^{-n+1+m} +(-1)^{m+1}   \|x\|_K^{-n+1+m}$ must be a homogeneous polynomial of $x$.  An  algebraic result from   \cite{KMY} then implies that $K$ must be an ellipsoid in even dimensions.

Now we will provide details of the above plan.
Let us write the Hilbert transform of $A_K(\xi,t) $ as follows
$$\mathcal{H}A_K(\xi,t) = \frac{1}{\pi} \int_0^\infty \frac{A_K(\xi,t-z)-A_K(\xi,t+z)}{z} \, dz.$$

Let $q$ be a complex number such that $-1<\Re q <0$. Consider the fractional derivative of order $q$ at $t=0$ of the function $\mathcal{H}A_K(\xi,t)$.

$$(\mathcal{H}A_K)^{(q)}(\xi,0)=\frac{1}{\Gamma(-q)} \int_0^\infty t^{-1-q} \, \mathcal{H}A_K(\xi, -t)\, dt.$$

Let us briefly explain why the last integral converges. $\mathcal{H}A_K(\xi,t)$ is a continuous function of $t$ on $\mathbb R$ except possibly at the points $t=h_K^+(\xi)$ and $t=h_K^-(\xi)$, where in the worst case it behaves as  $\ln |h_K^+(\xi) - t|$ and $\ln |h_K^-(\xi) - t|$ respectively.
Additionally, as $t\to \pm \infty$ it behaves as $1/t$.

Writing $\mathcal{H}A_K$ as follows:
$$\mathcal{H}A_K(\xi,t) = \lim_{\epsilon\to 0^+} \frac1{\pi} \int_0^\infty \frac{A_K(\xi,t-z)-A_K(\xi,t+z)}{z^{1+\epsilon}} \, dz,$$
and using the dominated convergence theorem and Fubini's theorem we get
\begin{align*} (\mathcal{H}A_K)^{(q)}(\xi,0)& = \lim_{\epsilon\to 0^+}\frac{1}{\pi \Gamma(-q)} \int_0^\infty t^{-1-q} \int_0^\infty \frac{A_K(\xi,-t-z)-A_K(\xi,-t+z)}{z^{1+\epsilon}} \, dz\, dt
\\
&= \lim_{\epsilon\to 0^+}\frac{1}{\pi \Gamma(-q)} \int_0^\infty \frac1{z^{1+\epsilon}} \int_0^\infty t^{-1-q} \left(A_K(\xi,-t-z)-A_K(\xi,-t+z)\right) \, dt\, dz\\
&
= \lim_{\epsilon\to 0^+}
\frac{1}{\pi \Gamma(-q)} \int_0^\infty \frac1{z^{1+\epsilon}} \int_{\mathbb R^n} \Big(\left(-z  -   x\cdot\xi  \right)_+^{-1-q}- \left(z  -x\cdot\xi  \right)_+^{-1-q}\Big) \chi(\|x\|_K)\, dx\, dz.
\end{align*}
Here and below we use the following notation. If $\Re \lambda >-1$, then $$t_+^\lambda =\begin{cases}0, & t\le 0,\\ t^\lambda, & t> 0 ,\end{cases} \qquad \mbox{and} \qquad t_-^\lambda =\begin{cases}|t|^\lambda, & t< 0,\\ 0, & t\ge 0. \end{cases}$$

Observe that $(\mathcal{H}A_K)^{(q)}(\xi,0)$ naturally extends to a homogeneous function of $\xi\in \mathbb R^n$ of degree $-1-q$, and we will consider its distributional Fourier transform with respect to $\xi$. Let $\phi$ be a Schwarz function. Then
\begin{align*}&\langle \left((\mathcal{H}A_K)^{(q)}(\cdot,0)\right)^\wedge, \phi\rangle = \langle  (\mathcal{H}A_K)^{(q)}(\xi,0) , \hat\phi(\xi) \rangle\\
&
=
\lim_{\epsilon\to 0^+}\frac{1}{\pi \Gamma(-q)} \int_0^\infty \frac1{z^{1+\epsilon}} \int_{\mathbb R^n} \chi(\|x\|_K) \int_{\mathbb R^n} \Big(\left(-z  -   x\cdot\xi  \right)_+^{-1-q}- \left(z  -   x\cdot\xi  \right)_+^{-1-q}\Big) \hat\phi(\xi)\, d\xi \, dx dz\\
&
=\lim_{\epsilon\to 0^+}
\frac{1}{\pi \Gamma(-q)} \int_0^\infty \frac1{z^{1+\epsilon}} \int_{\mathbb R^n} \chi(\|x\|_K) \int_{\mathbb R} \Big(\left(u-z   \right)_+^{-1-q}- \left(u+z \right)_+^{-1-q}\Big) \int_{  x\cdot\xi =-u}\hat\phi(\xi)\, d\xi\, du \, dx dz.
\end{align*}
The Fourier transform of $\left(u-z   \right)_+^{-1-q}- \left(u+z \right)_+^{-1-q}$ with respect to $u$ equals
\begin{multline*}i\Gamma(-q) \Big(e^{i(-1-q)\pi/2}s_+^q e^{-izs}-e^{i(1+q)\pi/2}s_-^q e^{-izs} - e^{i(-1-q)\pi/2}s_+^q e^{izs}+e^{i(1+q)\pi/2}s_-^q e^{izs} \Big)\\
= 2\sin(zs) \Gamma(-q) \Big(e^{i(-1-q)\pi/2}s_+^q  -e^{i(1+q)\pi/2}s_-^q  \Big);
\end{multline*}
see \cite[Ch. II, Sec. 2.3]{GS}.

Using the connection between the Radon transform and the Fourier transform, we get
\begin{align*} (2\pi)^{-n+1}&\langle \left((\mathcal{H}A_K)^{(q)}(\xi,0)\right)^\wedge, \phi\rangle\\
&=\lim_{\epsilon\to 0^+}
\frac{2 }{\pi } \int_0^\infty \frac1{z^{1+\epsilon}} \int_{\mathbb R^n} \chi(\|x\|_K)  \int_{\mathbb R}  \sin(zs)   \Big(e^{i(-1-q)\pi/2}s_+^q  -e^{i(1+q)\pi/2}s_-^q  \Big) \phi(-sx)  \, ds \, dx\, dz\\
&=\lim_{\epsilon\to 0^+}
\frac{2 }{\pi }   \int_{\mathbb R^n} \chi(\|x\|_K)  \int_{\mathbb R}  \int_0^\infty \frac{\sin(zs)}{z^{1+\epsilon}} \, dz  \Big(e^{i(-1-q)\pi/2}s_+^q  -e^{i(1+q)\pi/2}s_-^q  \Big) \phi(-sx)  \, ds \, dx.
\end{align*}
The latter use of the Fubini theorem explains why we passed from $1/z$ to $1/z^{1+\epsilon}$ earlier: the integral of $\frac{\sin(zs)}{z^{1+\epsilon}}$ is absolutely convergent, while the integral of $\frac{\sin(zs)}{z }$ is not.
To compute $\int_0^\infty \frac{\sin(zs)}{z^{1+\epsilon}} \, dz $
we can write it as
$\frac1{2i}\int_0^\infty \frac{e^{izs}-e^{-izs}}{z^{1+\epsilon}} \, dz $ and then repeat the calculations from \cite[Ch. II, Sec. 2.3]{GS} for the Fourier transform of $z_+^{-1-\epsilon}$.
As a result we get
\begin{align*}\int_0^\infty \frac{\sin(zs)}{z^{1+\epsilon}} \, dz & = \frac1{2i} \left( i e^{-i(1+\epsilon)\pi/2}\Gamma(-\epsilon)\left( s_+^\epsilon+e^{i\epsilon \pi}s_-^\epsilon -  s_-^\epsilon - e^{i\epsilon \pi}s_+^\epsilon \right)\right) \\
&= \frac1{2}   e^{-i(1+\epsilon)\pi/2}\Gamma(-\epsilon)\left( s_+^\epsilon -   s_-^\epsilon\right) \left(1-e^{i\epsilon \pi} \right) \\
&=    -\sin(\epsilon \pi/2)\Gamma(-\epsilon)  |s|^\epsilon \mbox{sgn}(s).
\end{align*}
Therefore, $$\lim_{\epsilon\to 0^+} \int_0^\infty \frac{\sin(zs)}{z^{1+\epsilon}} \, dz = \frac{\pi}{2} \mbox{sgn}(s),$$
and hence
\begin{align*} (2\pi)^{-n+1}&\langle \left((\mathcal{H}A_K)^{(q)}(\xi,0)\right)^\wedge, \phi\rangle\\
&=     \int_{\mathbb R^n} \chi(\|x\|_K)  \int_{\mathbb R}  \mbox{sgn}(s) \Big(e^{i(-1-q)\pi/2}s_+^q  -e^{i(1+q)\pi/2}s_-^q  \Big) \phi(-sx)  \, ds \, dx\\
&=
\int_{\mathbb R^n} \chi(\|x\|_K)  \int_{0}^\infty  \mbox{sgn}(s)   e^{i(-1-q)\pi/2}s^q    \, ds\, \phi(-sx)  \, dx \\
&\qquad-
\int_{\mathbb R^n} \chi(\|x\|_K)  \int_{-\infty}^0  \mbox{sgn}(s)     e^{i(1+q)\pi/2}|s|^q    \, ds \,\phi(-sx)  \, dx \\
&=
\int_{\mathbb R^n} \chi(\|-x\|_K)  \int_{0}^\infty   e^{i(-1-q)\pi/2}s^q    \, ds\, \phi(sx)  \, dx \\
&\qquad+ \int_{\mathbb R^n} \chi(\|x\|_K)  \int_0^{\infty}        e^{i(1+q)\pi/2}s^q    \, ds \,\phi(sx)  \, dx \\
& =   e^{-i(1+q)\pi/2}
\int_{S^{n-1}} \int_0^{\|-\theta\|_K}r^{n-1} \int_0^{\infty} s^q\phi(sr\theta)\, ds \,dr \, d\theta\\
&\qquad+
e^{i(1+q)\pi/2}\int_{S^{n-1}} \int_0^{\|\theta\|_K}r^{n-1} \int_0^{\infty} s^q\phi(sr\theta) \,ds\, dr\, d\theta\\
& =   e^{-i(1+q)\pi/2}
\int_{S^{n-1}} \int_0^{\|-\theta\|_K}r^{n-2-q} \int_0^{\infty} s^q\phi(s\theta)\, ds \,dr \, d\theta\\
&\qquad+
e^{i(1+q)\pi/2}\int_{S^{n-1}} \int_0^{\|\theta\|_K}r^{n-2-q} \int_0^{\infty} s^q\phi(s\theta) \,ds\, dr\, d\theta\\
& = \frac{1}{n-1-q} \int_{S^{n-1}}  \left( e^{-i(1+q)\pi/2} \|-\theta\|_K^{-n+1+q} + e^{i(1+q)\pi/2} \|\theta\|_K^{-n+1+q}\right)
\int_0^{\infty} s^q\phi(s\theta) \,ds\,   d\theta\\
& = \frac{1}{n-1-q} \int_{\mathbb R^n}  \left( e^{-i(1+q)\pi/2} \|-x\|_K^{-n+1+q} + e^{i(1+q)\pi/2} \|x\|_K^{-n+1+q}\right)
\phi(x) \,dx.
\end{align*}

Thus, we have shown that
$$  (2\pi)^{-n+1}\left((\mathcal{H}A_K)^{(q)}(\xi,0)\right)^\wedge (x)= \frac{1}{n-1-q}  \left( e^{-i(1+q)\pi/2} \|-x\|_K^{-n+1+q} + e^{i(1+q)\pi/2} \|x\|_K^{-n+1+q}\right),$$
that is
$$   (\mathcal{H}A_K)^{(q)}(\xi,0) = \frac{1}{2\pi(n-1-q)}  \left( e^{-i(1+q)\pi/2} \|-x\|_K^{-n+1+q} + e^{i(1+q)\pi/2} \|x\|_K^{-n+1+q}\right)^\wedge(\xi),$$
for all complex $q$ such that $-1<\Re q<0$. Using analytic continuation, we see that the formula is still valid for all
$q\in \mathbb C\setminus\{n-1\}.$

Since  $\mathcal{H}A_K (\xi,t)$ is a polynomial of $t$ of degree at most $N$, we have  $(\mathcal{H}A_K)^{(m)}(\xi,0)=0$ for all $\xi\in \mathbb R^n\setminus\{0\}$ and all natural $m>\max\{N,n-1\}$.
This means that
$$ \left( e^{-i(1+m)\pi/2} \|-x\|_K^{-n+1+m} + e^{i(1+m)\pi/2} \|x\|_K^{-n+1+m}\right)^\wedge(\xi)$$
is a linear combination of derivatives of the delta function supported at the origin.
Thus, $$e^{-i(1+m)\pi/2} \|-x\|_K^{-n+1+m} + e^{i(1+m)\pi/2} \|x\|_K^{-n+1+m}$$ is a polynomial of $x$.

When $m$ is odd,   we get $$e^{-i(1+m)\pi/2} =e^{i(1+m)\pi/2},$$ and hence
$$  \|-x\|_K^{-n+1+m} +   \|x\|_K^{-n+1+m}$$ is a polynomial when $m$ is odd.

Similarly, when $m$ is even,  we get $$e^{-i(1+m)\pi/2} =-e^{i(1+m)\pi/2},$$ and thus
$$  \|-x\|_K^{-n+1+m} -  \|x\|_K^{-n+1+m}$$ is a polynomial when $m$ is even.

Now we apply the same reasoning as in  \cite[Theorem 3.7]{KMY} to show that $K$ is an ellipsoid if $n$ is even.

Finally, let us remark that  bodies with polynomial $\mathcal{H}A_K(\xi,t)$ do not exist in odd dimensions. This follows from the fact that  the function
$$  \|-x\|_K^{-n+1+m} +   \|x\|_K^{-n+1+m}$$ is an even function, but at the same time,  it has to be a polynomial of an odd degree $-n+m+1$, if $n$ and $m$ are both odd. The only polynomial that is both odd and even is the zero polynomial.

\bigskip
{\bf Acknowledgment.} 
The work on this paper began at BIRS where the authors participated in  the 2022  Research in Teams Program ``Algebraically integrable
domains". The authors are grateful to BIRS and its staff for their hospitality and excellent research conditions.

\end{document}